\theoremstyle{plain}
\newtheorem{theorem}{Theorem}[section]
\newtheorem{lemma}[theorem]{Lemma}
\newtheorem{remark}[theorem]{Remark}
\newtheorem{example}[theorem]{Example}
\theoremstyle{definition}
\newtheorem{definition}[theorem]{Definition}
\newcommand{\prob}{\mathcal{P}}
\newcommand{\px}{\mathscr PX}
\newcommand{\be}{\begin{equation}} 
\newcommand{\ee}{\end{equation}}
\newcommand{\sa}{\Sigma}
\newcommand{\mO}[1]{(#1,\Sigma_{#1})}
\newcommand{\mcS}{\mathcal{S}}
\newcommand{\mcI}{\mathcal{I}}
\newcommand{\B}{\mathcal{B}}
\newcommand{\M}{\mathcal{M}_{cg}}
\newcommand{\meas}{\mathcal{M}}
\newcommand{\measT}{\M^{T}}
\begin{document}

\title{A Categorical Foundation for Bayesian Probability}

\author{Jared Culbertson         \and
        Kirk Sturtz
}


\maketitle

\begin{abstract}
Given two measurable spaces $H$ and $D$ with countably generated $\sigma$-algebras, a perfect prior probability measure $P_H$ on $H$ and a sampling distribution $\mcS\colon  H \rightarrow D$, there is a corresponding inference map $\mcI\colon  D \rightarrow H$ which is unique up to a set of measure zero. Thus, given a data measurement $\mu\colon 1 \rightarrow D$,  a posterior probability $\widehat{P_H}=\mcI \circ \mu$ can be computed.  This procedure is iterative: with each updated probability $P_H$, we obtain a new joint distribution which in turn yields a new inference map $\mcI$ and the process repeats with each additional measurement.    The main result uses an existence theorem for regular conditional probabilities by Faden, which holds in more generality than the setting of Polish spaces. This less stringent setting then allows for non-trivial decision rules (Eilenberg--Moore algebras) on finite (as well as non finite) spaces, and also provides for a common framework for decision theory and Bayesian probability. 
\vspace{5pt}

\noindent
{\em NB:} This paper has been published in Applied Categorical Structures,\\
 http://link.springer.com/article/10.1007/s10485-013-9324-9, please contact the authors at jared.culbertson@us.af.mil for the correct reference.

\end{abstract}

\section{Introduction}

Bayesian probability is a subject that has proven very successful in prediction, inference and model selection~\cite{Berger:Statistical_decision_theory,Howson-Urbach:scientific_reasoning,Jaynes:probability}.   
 \v{C}encov~\cite{Cencov:statistical_decision_rules} gives a categorical foundation for non-Bayesian statistical inference, but as far as the authors are aware, a categorical framework for Bayesian probability has not been fully developed. Lawvere took the first steps in this direction by defining the category of probabilistic mappings in the unpublished manuscript~\cite{Lawvere:prob_mappings}.  Following this, Lawvere
 and Huber~\cite{Lawvere:personal_2011} gave a seminar in Zurich on \emph{Bayesian Sections}, further developing this category as a basis for Bayesian probability.  The first appearance in the literature was an expansion of these ideas by Giry~\cite{Giry:categorical_probability}, who showed that the endofunctor on the category of measurable spaces $T\colon  \meas \to \meas$ associated to the probability adjunction given by Lawvere forms a monad, and that Lawvere's category of probabilistic mappings is the Kleisli category of that monad.  
 
 Subsequently, Meng~\cite{Meng:metric_spaces}, examined the category of convex sets and affine linear maps, which can be shown to be equivalent to the category of Eilenberg--Moore algebras of the Giry monad. This category can be thought of as the category of ``decision rules'' since the objects of that category are certain measurable functions $TX \to X$ whose fibers partition the space of probability measures on a given space $X$ into positive convex measurable sets.\footnote{Doberkat~\cite{Doberkat:tracing} refers to this process of making decision rules as \emph{derandomization}, which in certain applications like probabilistic semantics may be a more appropriate terminology.}  Based on the work by Giry restricting the monad to Polish spaces, Doberkat\cite{Doberkat:Kernel,Doberkat:stochastic_relations} has since characterized the Eilenberg--Moore $T$-algebras for these topological spaces.  That work, however, was based upon giving the space of probability measures the $\sigma$-algebra generated by the weak topology (as used for the Polish space monad) which results in (nontrivial) finite spaces having no $T$-algebras.  In the final section, we show that this negative result can be circumvented by avoiding topological conditions and using the initial $\sigma$-algebra generated by the evaluation maps. Others, including Wendt~\cite{Wendt:disintegrations}, van Breugel~\cite{vanBreugel:metric_monad} and Abramsky--Blute--Panangaden~\cite{Abramsky-Blute-Panangaden:nuclear_ideals} have also studied similar constructions.
 
Central to the study of Bayesian probability is the existence of regular conditional probabilities. Many textbooks restrict to Polish spaces in order to prove this existence ({\em e.g.}, see \cite{Dudley:probability}), though this is not a strictly necessary condition. Several more general characterizations of conditions which guarantee the existence of regular conditional probabilities have been found, either restricting the spaces involved or the joint distributions which are allowed. In \cite{Pachl:disintegrations}, Pachl does not require even countably generated $\sigma$-algebras, but relies instead on a certain notion of compactness. We will prefer to follow \cite{Faden:regular_conditional_probabilities}, where Faden gives a necessary and sufficient condition when we restrict to countably generated spaces. Namely, the marginals of a joint distribution must give {\em perfect} measure spaces. The class of perfect measures is broad and includes, for example, all Radon measures. For this reason, we will only consider perfect measures and begin by showing that the Giry monad restricted to perfect probability measures is still a monad (this is straightforward based on Theorem~\ref{theorem::perfect_measures}). Note that everything prior to Section~\ref{section::constructing_regular_conditionals} holds without restricting to perfect probability measures (see \cite{Giry:categorical_probability}).

The main theorem (Theorem~\ref{theorem::inference}) states that inference maps are uniquely determined by a prior probability and a sampling distribution. This result follows from the existence of regular conditional probabilities, and we restate an existence theorem (Theorem~\ref{theorem::regular_conditionals}) of Faden~\cite{Faden:regular_conditional_probabilities}. Using our characterization of Bayesian probability and the well-known fact that the Kleisli category embeds into the category of $T$-algebras, we can then see that the category of decision rules provides a common framework for both decision theory and Bayesian probability.  Some of these ideas are similar in spirit to the general notion of distributions based on commutative monads found in the recent paper of Kock~\cite{Kock:commutative_monads}. 

\section{The Category of Perfect Probabilistic Mappings}
We begin with an overview of the category of perfect probabilistic mappings, which is a slight modification of Lawvere's category (see \cite{Lawvere:prob_mappings}) of probabilistic mappings. The restrictions to have countably generated spaces as objects and restrict to perfect probability measures in the definition of the morphisms are required to ensure the existence of regular conditional probabilities, but are otherwise unnecessary. We include these restrictions throughout the paper to avoid overcomplicating the statements of the main theorems. See the introduction for a discussion of alternative definitions. Most of the following fundamental results are widely known without the restriction to perfect measures, and we indicate where the usual arguments must be modified due to this additional constraint. We also review the definition and foundational properties of perfect probability measures. 

Let us fix the following notation. We will denote the $\sigma$-algebra of a measurable space $X$ by $\sa_{X}$, and the category of countably generated measurable spaces and measurable functions by $\M$. For an  object $(X, \sa_X)$ in $\M$ we will often drop the associated $\sigma$-algebra from the notation and denote it simply by $X$ when the $\sigma$-algebra is obvious or inconsequential. We will use $(1, \sa_{1})$ and $(2, \sa_{2})$ for the one-element and two-element measurable sets with the discrete $\sigma$-algebras, but we will similarly just write ``$1$'' or ``$2$'' when these are used as objects in some category. 

\begin{definition}
A measure space $(X, \sa, \mu)$ is called {\em perfect} if for any measurable function $f \colon  X \to \mathbb{R}$, there exists a Borel set $E \subset f(X)$ such that $\mu(f^{-1}(E)) = \mu(X)$. A family $\{\mu_i\}_{i \in I}$ of measures on $X$ is {\em equiperfect} if given $f$ as before, there exists a single Borel set $E \subset f(X)$ with $\mu_{i}(f^{-1}(E)) = \mu_{i}(X)$ for all $i \in I$.
\end{definition}

The following theorem collects many basic results about perfect probability measures. We refer the reader to \cite{Rodine:perfect_probabilities} and \cite{Faden:regular_conditional_probabilities} and the references therein for proofs and more details on perfect measures. 

\begin{theorem}
\label{theorem::perfect_measures}
Let $(X, \sa_{X})$ and $(Y, \sa_{Y})$ be measurable spaces. Then
\begin{enumerate}
	\item[(a)] if $P$ is a \{0,1\}-valued probability measure, then $P$ is perfect
	\item[(b)] if $P$ and $Q$ are probability measures on $X$ such that $Q \ll P$, then $Q$ is perfect,
	\item[(c)] if $\sa_{X}' \subset \sa_{X}$ and a probability measure $P$ on $X$ is perfect, then $P$ restricted to $\sa_{X}'$ is perfect,
	\item[(d)] if $f\colon X \to Y$ is a measurable function and a probability measure $P$ on $X$ is perfect, then $f_{\ast}P$ is a perfect probability measure on $Y$,
	\item[(e)] if $J$ is a probability measure on $(X\times Y, \sa_{X}\otimes \sa_{Y})$ with marginals $P$ and $Q$, then $J$ is perfect if and only if $P$ and $Q$ are perfect,
	\item[(f)] if $f\colon X\times \sa_{Y} \to [0,1]$ is measurable for each fixed $B \in \sa_{Y}$ and $P$ is a perfect probability measure on $X$, then the probability measure on $Y$ defined by $Q(B) = \int_{X} Q(x,B) \,dP$ is perfect if and only if  $\{f(x, \cdot)\}_{x \in X}$ is an equiperfect family of probability measures on $Y$ except on a $P$-null set.
\end{enumerate}
\end{theorem}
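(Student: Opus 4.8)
The plan is to dispatch parts (a)--(d), together with the easy (``forward'') direction of (e), as direct consequences of the definition applied to image measures, and to concentrate the real work in the converse direction of (e) and in part (f). Throughout I read (b) with the evidently intended standing hypothesis that $P$ is perfect, since $Q\ll P$ alone cannot force perfectness (taking $Q=P$ with $P$ non-perfect would be a counterexample).

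First I would handle the elementary parts. For (a), push $P$ forward: $f_*P$ is a $\{0,1\}$-valued Borel probability measure on $\mathbb{R}$, and a standard nested dyadic-interval argument shows it is a Dirac mass $\delta_{y_0}$; since $P(f^{-1}(y_0))=1>0$ we have $y_0\in f(X)$, so $E=\{y_0\}$ witnesses perfectness. For (b), if $E\subseteq f(X)$ is the Borel witness for $P$, then $P(X\setminus f^{-1}(E))=0$, and $Q\ll P$ forces $Q(X\setminus f^{-1}(E))=0$, so the same $E$ works for $Q$. For (c), every $\sa_{X}'$-measurable $f$ is $\sa_{X}$-measurable, so the witness produced by perfectness of $P$ on $\sa_{X}$ transfers verbatim and $f^{-1}(E)\in\sa_{X}'$. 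For (d), given measurable $g\colon Y\to\mathbb{R}$, apply perfectness of $P$ to $g\circ f$; the resulting Borel set lies in $g(f(X))\subseteq g(Y)$ and witnesses perfectness of $f_*P$ because $(f_*P)(g^{-1}E)=P((g\circ f)^{-1}E)$. The forward direction of (e) is then immediate, since the marginals $P,Q$ are the pushforwards of $J$ along the coordinate projections and hence perfect by (d).

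Part (f) reduces to a single elementary observation. In one direction, equiperfectness (off a $P$-null set) provides, for a given measurable $h\colon Y\to\mathbb{R}$, one Borel $E\subseteq h(Y)$ with $f(x,h^{-1}(E))=1$ for $P$-almost every $x$; integrating against $P$ gives $Q(h^{-1}(E))=\int_{X}f(x,h^{-1}(E))\,dP=1$, so $E$ witnesses perfectness of $Q$. Conversely, if $Q$ is perfect, a Borel witness $E\subseteq h(Y)$ satisfies $\int_{X}f(x,h^{-1}(E))\,dP=1$ with a $[0,1]$-valued integrand, whence $f(x,h^{-1}(E))=1$ for $P$-almost every $x$; the same $E$ is then a common witness off a $P$-null set, i.e.\ the family is equiperfect almost everywhere. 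Measurability of $x\mapsto f(x,h^{-1}(E))$ is guaranteed by the hypotheses on $f$ and $h$, and perfectness of $P$ enters only through the standing assumptions of the setting.

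The main obstacle is the converse of (e): that an arbitrary coupling $J$ of perfect marginals $P,Q$ is perfect. This resists a single image-measure argument, because a measurable $h\colon X\times Y\to\mathbb{R}$ need not factor through either projection. Here I would invoke the characterization available precisely because $\sa_{X}$ and $\sa_{Y}$ are \emph{countably generated} (this is exactly where countable generation is essential, and where I lean on Faden and Rodine): a measure on a countably generated space is perfect if and only if its pushforward along the canonical map into the Cantor space $\N$ concentrates on a Borel (indeed compact) subset of the image. Writing $\phi_{X}\colon X\to\N$ and $\phi_{Y}\colon Y\to\N$ for these maps, perfectness of $P$ and $Q$ yields Borel sets $K_{X}\subseteq\phi_{X}(X)$ and $K_{Y}\subseteq\phi_{Y}(Y)$ of full measure. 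Setting $\psi=(\phi_{X},\phi_{Y})\colon X\times Y\to\N\times\N$, the two marginals of $\psi_*J$ concentrate on $K_{X}$ and $K_{Y}$, so $\psi_*J$ concentrates on $K_{X}\times K_{Y}$; and since $\psi(X\times Y)=\phi_{X}(X)\times\phi_{Y}(Y)$, this product witness lies inside the image of $\psi$, which translates back to perfectness of $J$. The delicate points I expect to spend effort on are establishing the equivalence between the defining real-valued formulation of perfectness and this image-concentration formulation, and verifying that a Borel real function on $X\times Y$ is faithfully represented through $\psi$; it is precisely the breakdown of such reductions for non-countably-generated $\sigma$-algebras that makes the converse of (e) subtle.
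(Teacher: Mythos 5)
The paper never proves this theorem: it is presented as a catalogue of known facts, explicitly deferred to Rodine and Faden, so there is no internal argument to compare against and your reconstruction must stand on its own. It essentially does. Parts (a)--(d) and the forward half of (e) are handled correctly, and your repair of (b) (adding the evidently intended hypothesis that $P$ is perfect) is both necessary and the standard reading. For the converse of (e), your route --- test perfectness over a countably generated $\sigma$-algebra via the canonical map $\phi_X\colon X \to 2^{\mathbb{N}}$, then note that concentration of the two marginals on Borel sets $K_X \subseteq \phi_X(X)$, $K_Y \subseteq \phi_Y(Y)$ forces $\psi_{*}J$ to concentrate on $K_X \times K_Y \subseteq \psi(X \times Y)$ --- is precisely the mechanism in the cited literature (Faden, going back to Sazonov), and the one lemma you lean on without proof (perfectness can be tested on a single generating map, via Doob--Dynkin factorization plus Lusin's theorem and inner regularity) is genuine but standard; deferring it is no worse than what the paper itself does. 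Two slips to fix: the parenthetical ``indeed compact'' should be ``$\sigma$-compact'' (a measure whose image is, say, Gaussian-like has no compact set of full measure in the image), and countable generation is not strictly where (e) ``is essential'': the general case reduces to the countably generated one, since any single real measurable function on $X \times Y$ is measurable with respect to $\Sigma_0 \otimes \Sigma_1$ for countably generated sub-$\sigma$-algebras $\Sigma_0 \subseteq \Sigma_X$, $\Sigma_1 \subseteq \Sigma_Y$, whose restricted marginals are perfect by (c).

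On (f), be aware of a quantifier issue that your write-up glosses over but implicitly resolves correctly. Your converse produces, for each measurable $h\colon Y \to \mathbb{R}$, one Borel witness $E \subseteq h(Y)$ with $f(x, h^{-1}E) = 1$ for $P$-a.e.\ $x$, where the exceptional null set depends on $h$. Under the literal reading of the statement --- a single null set $N$ off which the family $\{f(x,\cdot)\}_{x \notin N}$ is equiperfect for all $h$ simultaneously --- the converse is actually false: take $X = Y = [0,1]$, $P$ Lebesgue, $f(x,\cdot) = \delta_x$, so $Q = P$ is perfect; for any null set $N$, pick a full-measure Borel set $B \subseteq [0,1] \setminus N$, a Borel surjection from $B$ onto a properly analytic set $A' \subseteq [0,1]$, and extend it by a constant $c \notin A'$ off $B$; then equiperfectness of $\{\delta_x\}_{x \notin N}$ would require a Borel set sandwiched between $A'$ and $A' \cup \{c\}$, which would make $A'$ Borel. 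So the per-function reading (null set allowed to depend on $h$) is the only one under which the theorem is true, and it is exactly what your two integration arguments prove; you should state this choice of reading explicitly rather than leave it implicit in the phrase ``equiperfect almost everywhere.''
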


\begin{definition}
The {\em category of perfect probabilistic mappings} $\prob$ has countably generated measurable spaces $(X, \sa_X)$ as  objects and an arrow between two such objects $f\colon (X, \sa_X)  \to (Y, \sa_Y)$ consists of a function $f\colon X \times \sa_Y  \to [0,1]$ such that
\begin{enumerate}
\item[(i)] for all $B\in \sa_Y$, the function $f(\cdot,B)\colon X \rightarrow [0,1]$ is measurable,
\item[(ii)] the collection $\{f(x,\cdot)\colon \sa_Y \rightarrow [0,1]\}_{x \in X}$ is an equiperfect family of probability measures on $Y$.
\end{enumerate}
That is, morphisms in $\prob$ can be thought of as parametrized families of perfect probability measures that vary measurably. For an arrow $f\colon \mO{X} \rightarrow \mO{Y}$ we will often denote the function $f(\cdot,B)\colon X \to [0,1]$ by $f_B$ and the function $f(x,\cdot)\colon \Sigma_Y \to [0,1]$ by $f_x$. 

Given two arrows
\begin{equation}
\mO{X}  \xrightarrow{f} \mO{Y}
\xrightarrow{g} \mO{Z}
\end{equation}
the composition $g \circ f\colon X \times \sa_Z  \rightarrow [0,1]$ is defined by
\begin{equation}
(g \circ f)(x,C) = \int_{y \in Y} g_{C}(y) df_x.
\end{equation}
This composition is well-defined due to Theorem~\ref{theorem::perfect_measures} (f), and associativity follows easily from the monotone convergence theorem.  An important fact is that every measurable function $f\colon X  \to Y$ may be regarded as a $\prob$-morphism $\delta_{f}\colon X  \to Y$, where the Dirac (or one point) measure
\begin{equation}
\delta_f(x,B) =
\begin{cases}
 1 & \text{if } f(x)\in B\\
 0 & \text{if } f(x)\notin B
\end{cases}
\end{equation}
assigns to each $x \in X$ the Dirac measure on $Y$ which is concentrated at $f(x)$.  Taking the measurable function $f$ to be the identity map on a particular measurable space $X$ gives the arrow $\delta_{Id_{X}}\colon \mO{X} \to \mO{X}$, {\em i.e.}, the identity arrow for $X$ in $\prob$. In fact, it is easy to check that the association $f \mapsto \delta_{f}$ determines a functor $\delta\colon  \M \to \prob$ taking a measurable space to itself. Note that this functor is not faithful, however, and so we do not get an embedding of $\M$ into $\prob$. We will call a $\prob$ arrow $P\colon X \to Y$ {\em deterministic} if for every $B \in \sa_Y$ the measurable functions $P_{B}\colon  X \to [0,1]$ assume only the values  $0$ or $1$. In fact, every deterministic $\prob$-arrow $P\colon X \to Y$ is of the form $P = \delta_{f}$ for some measurable function $f$, provided that $X$ has cardinality below the cardinality of the set of all measurable sets. 
\end{definition}

The following lemma gives two useful properties which follow easily from standard exercises in measure theory and the definition of composition in $\prob$.
\begin{lemma}  \label{lem::Dirac_property}   If $p\colon X \to Y$ is a measurable function and $f\colon Y \to Z$ a $\prob$-morphism, then the composition
\begin{equation*} 
 \begin{tikzpicture}[baseline=(current bounding box.center)]
	\node	(X)	at	(-3,0)	                  {$X$};
	\node        (Y)  at      (0,-0)                            {$Y$};
	\node        (Z)   at   (3,0)                 {$Z$};
	\draw[->, above] (X) to node  {$\delta_p$} (Y);
	\draw[->,above] (Y) to node {$f$} (Z);
 \end{tikzpicture}
 \end{equation*}
 is given by  $(f \circ \delta_p)(x,C)=f_{p(x)}(C)$. On the other hand, if $q\colon Y \to Z$ is a measurable function and $g\colon X \to Y$ a $\prob$-morphism, then the composition
 \begin{equation*}
 \begin{tikzpicture}[baseline=(current bounding box.center)]
	\node	(X)	at	(-3,0)	                  {$X$};
	\node        (Y)  at      (0,-0)                            {$Y$};
	\node        (Z)   at   (3,0)                 {$Z$};
	\draw[->, above] (Y) to node  {$\delta_q$} (Z);
	\draw[->,above] (X) to node {$g$} (Y);
 \end{tikzpicture}
 \end{equation*}
 is given by $( \delta_q \circ g)(x,C)=g_x(q^{-1}C)$.
\end{lemma}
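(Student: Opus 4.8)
The plan is to unwind the definition of composition in $\prob$ in each of the two cases and then simplify the resulting integral by elementary measure theory. Recall that composition is given by $(g\circ f)(x,C)=\int_{y\in Y} g_C(y)\,df_x$, where $g_C$ is integrated against the probability measure $f_x$. Both identities are direct substitutions into this formula.

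For the first identity, I would take $\delta_p$ as the inner arrow and $f$ as the outer arrow, so that $(f\circ\delta_p)(x,C)=\int_{y\in Y} f_C(y)\,d(\delta_p)_x$. The key observation is that $(\delta_p)_x=\delta_p(x,\cdot)$ is by definition the Dirac measure on $Y$ concentrated at the point $p(x)$. Integrating the measurable function $f_C$ against a Dirac measure simply evaluates it at the concentration point, so $\int_{y\in Y} f_C(y)\,d(\delta_p)_x=f_C(p(x))=f_{p(x)}(C)$, which is the claimed formula.

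For the second identity, I would take $g$ as the inner arrow and $\delta_q$ as the outer arrow, giving $(\delta_q\circ g)(x,C)=\int_{y\in Y}(\delta_q)_C(y)\,dg_x$. Here the integrand $(\delta_q)_C(y)=\delta_q(y,C)$ equals $1$ exactly when $q(y)\in C$ and $0$ otherwise; in other words it is the indicator function of the measurable set $q^{-1}(C)$, which is measurable since $q$ is a measurable function. The integral of an indicator function against the probability measure $g_x$ returns the measure of the set, so $(\delta_q\circ g)(x,C)=g_x(q^{-1}(C))$, as claimed.

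There is no serious obstacle here: both computations are substitutions into the composition formula followed by the two standard facts that $\int h\,d\delta_a=h(a)$ and $\int \mathbf{1}_A\,d\mu=\mu(A)$. The only points worth a remark are that $\delta_p$ and $\delta_q$ are genuine $\prob$-morphisms, which is guaranteed by the preceding discussion that every measurable function induces a Dirac morphism, and that all integrands are measurable so the integrals are well-defined, which holds because $p$ and $q$ are measurable and $f$, $g$ satisfy the measurability condition (i) in the definition of $\prob$.
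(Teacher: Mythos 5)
Your proof is correct and is exactly the argument the paper has in mind: the paper offers no written proof, stating only that the lemma ``follows easily from standard exercises in measure theory and the definition of composition in $\prob$,'' and your two computations (integrating against a Dirac measure evaluates at the point of concentration; integrating the indicator of $q^{-1}(C)$ returns its measure) are precisely those standard exercises. Nothing is missing.
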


There are several distinguished objects in $\prob$ that play an important role in many constructions. Any set $X$ with the indiscrete $\sigma$-algebra $\sa_X=\{X, \emptyset\}$ is a terminal object since any arrow $P\colon Y \rightarrow X$
is completely determined by the fact that $P_{y}$ must be a probability measure on $X$. We denote the canonical terminal object by $1$ since it is isomorphic to the one-element set. Notice that an arrow $P\colon 1 \to X$ is precisely a perfect probability measure on $X$ and that $1$ is a separator for $\prob$. In addition to having a separator, the category $\prob$ also has a coseparator, the two-element set $2=\{\top,\bot\}$ with $\sa_2$ the discrete algebra on $2$. Moreover, there is a set bijection $\hom_{\prob}(X,2) \simeq \hom_{\M}(X,[0,1]).$

We briefly show how the Giry monad factors through $\prob$. Let $\mathscr{P}X$ denote the set of perfect probability measures on $X$, endowed with the coarsest $\sigma$-algebra such that the evaluation maps $ev_{B}\colon  \mathscr PX \to [0,1]$ given by $ev_{B}(P) = P(B)$ are measurable.  Then we can define a functor $\mathscr P \colon  \prob \to \M$ which sends a measurable space $X$ to the space $\mathscr PX$ of probability measures on $X$. On arrows, $\mathscr P$ sends the $\prob$-arrow $f\colon  X \to Y$ to the measurable function $\mathscr Pf\colon  \mathscr PX \to \mathscr PY$ defined pointwise on $\sa_{Y}$ by
\be
	\mathscr Pf(P)(B) = \int_{X} f_{B} \, dP.
\ee
That is, $\mathscr Pf(P)$ gives the probability measure on $Y$ defined by the composition 
\be
\begin{tikzpicture}
	\node (1) at (0,0) {$1$};
	\node (X) at (2,0) {$X$};
	\node (Y) at (4,0) {$Y$};
	
	\draw[->, above] (1) to node {$P$} (X);
	\draw[->, above] (X) to node {$f$} (Y);
	\draw[->, below, out=345, in=195] (1) to node {$f \circ P$} (Y);
\end{tikzpicture}
\ee
in $\prob$. Since $\mathscr PX = \hom_{\prob}(1, X)$ as sets, another common notation for $\mathscr PX$ is $X^{1}$, but we will use the functor notation for clarity.

In fact, $\mathscr PX$ is an important object in $\prob$ as well, and based on the definition of the $\sigma$-algebra on $\mathscr PX$, we can define the evaluation morphism $\varepsilon_{X}\colon  \mathscr PX \to X$ by $\varepsilon_{X}(P, A) = P(A)$. With this, we are able to characterize the relationship between $\M$ and $\prob$, first proved in \cite{Giry:categorical_probability} for general measurable spaces and probability measures. To see that this proof goes through when restricting to countably generated spaces and perfect measures, we only need to observe that the $\sigma$-algebra defined above for $\mathscr PX$ is countably generated when $X$ is countably generated and that pushforwards take perfect measures to perfect measures. 
\begin{theorem} \label{adjunction} 
The functors $\delta\colon  \M \to \prob$ and $\mathscr P\colon  \prob \to \M$ form an adjunction
\[
\begin{tikzpicture}[baseline = (current bounding box.east)]
	\node (meas) at (0,0) {$\M$};
	\node (P) at (3,0) {$\prob$};
	
	\draw[->, above] ([yshift=2pt] meas.east) to node {$\delta$} ([yshift=2pt] P.west);
	\draw[->, below] ([yshift=-2pt] P.west) to node {$\mathscr P$} ([yshift=-2pt] meas.east);

	\node at (5,0) {$\delta \dashv \mathscr P$};
\end{tikzpicture}
\]

with the unit of the adjunction $\eta_X(x) = \delta_{\{x\}}$ and the counit  $\varepsilon_{X}\colon \px \to X$.
\end{theorem}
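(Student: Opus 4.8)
The plan is to establish the adjunction via a natural bijection of hom-sets
\[
\hom_{\prob}(\delta X, Y) \;\cong\; \hom_{\M}(X, \mathscr P Y),
\]
natural in $X \in \M$ and $Y \in \prob$, and then to recover the stated unit and counit as the transposes of the identity arrows. Because $\delta$ is the identity on objects, $\delta X = X$ and the left-hand side is simply the set of $\prob$-morphisms $X \to Y$. I would define the forward transpose by sending a $\prob$-morphism $f\colon X \to Y$ to $\widetilde f\colon X \to \mathscr P Y$ with $\widetilde f(x) = f_x$, and the reverse transpose by sending a measurable $g\colon X \to \mathscr P Y$ to the function $\widehat g(x,B) = g(x)(B)$ on $X \times \sa_Y$. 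As maps of underlying sets these are visibly mutually inverse, so all of the content lies in checking that each transpose lands in the correct class of arrows.

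First I would treat the forward direction. The measure $\widetilde f(x) = f_x$ is a perfect probability measure by condition (ii) in the definition of a $\prob$-morphism, so $\widetilde f$ really takes values in $\mathscr P Y$; and $\widetilde f$ is measurable because $\sa_{\mathscr P Y}$ is the initial $\sigma$-algebra generated by the evaluation maps, so it is enough to observe that $ev_B \circ \widetilde f = f_B$ is measurable by condition (i). This is the place to invoke the two observations flagged before the statement: that $\mathscr P Y$ is a genuine object of $\M$, since its $\sigma$-algebra is countably generated whenever $\sa_Y$ is, so that $\hom_{\M}(X, \mathscr P Y)$ is even defined; and that pushforward preserves perfectness (Theorem~\ref{theorem::perfect_measures}(d)), which is what keeps $\mathscr P$ a well-defined functor into $\M$.

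Next I would verify naturality in both variables, which amounts to matching pre- and post-composition by $\delta h$ and by $\prob$-arrows against composition by $h$ and by $\mathscr P(-)$; each such identity unwinds to the integral formula defining composition in $\prob$ and the definition of $\mathscr P$ on arrows, using associativity (which holds by monotone convergence). Granting the bijection, the unit $\eta_X\colon X \to \mathscr P X$ is the transpose of the identity arrow $\delta_{Id_X}$ on $\delta X = X$, which gives $\eta_X(x) = (\delta_{Id_X})_x = \delta_{\{x\}}$; dually the counit $\varepsilon_Y\colon \mathscr P Y \to Y$ is the transpose of $Id_{\mathscr P Y}$, giving $\varepsilon_Y(P,A) = P(A)$. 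The triangle identities then hold formally, and can also be checked directly from Lemma~\ref{lem::Dirac_property}.

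The hard part is the reverse transpose, and precisely condition (ii). Starting from a measurable $g\colon X \to \mathscr P Y$, each $g(x)$ is perfect, but to conclude that $\widehat g$ is a $\prob$-morphism one must find, for every measurable $h\colon Y \to \mathbb{R}$, a single Borel witness set $E \subset h(Y)$ serving all the measures $g(x)$ at once --- that is, equiperfectness of the parametrized family $\{g(x)\}_{x \in X}$. This uniformity in the parameter is exactly what fails to be automatic for an uncountable index set, and it is the one genuinely non-formal step, where I expect the perfect-measure machinery of Theorem~\ref{theorem::perfect_measures} (especially parts (d) and (f)) together with countable generation of the spaces to be needed in order to push Giry's original argument through. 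Everything else in the proof is the standard transpose bookkeeping.
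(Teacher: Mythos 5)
Your strategy --- the hom-set bijection $\hom_{\prob}(\delta X,Y)\cong\hom_{\M}(X,\mathscr PY)$ via the two transposes, recovering $\eta$ and $\varepsilon$ as transposes of identity arrows --- is the standard one and is essentially what the paper relies on: the paper gives no argument of its own beyond citing Giry and recording the same two observations you make (countable generation of $\sa_{\mathscr PY}$ when $\sa_Y$ is countably generated, and preservation of perfectness under pushforward, Theorem~\ref{theorem::perfect_measures}(d)). But your proof stops exactly at the step that carries all of the content, and you say so yourself: for the reverse transpose you must show that a measurable map $g\colon X\to\mathscr PY$ yields an \emph{equiperfect} family $\{g(x)\}_{x\in X}$, and you offer only the expectation that Theorem~\ref{theorem::perfect_measures}(d),(f) together with countable generation will ``push Giry's argument through.'' That is a genuine gap, not a deferred routine check. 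Part (f) produces equiperfectness only relative to a perfect reference measure $P$ on the domain, only off a $P$-null set, and only under the hypothesis that the mixture measure is perfect; in your setting there is no reference measure, and you need the conclusion at every $x$, not almost every $x$. Part (d) says nothing about families at all.

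Worse, the missing step is not merely unproved; under the paper's definitions it is false. Take $A\subset\mathbb{R}$ analytic and non-Borel, let $Y=A$ carry the trace Borel $\sigma$-algebra (which is countably generated), and let $f\colon\mathbb{R}\to A$ be a Borel surjection (extend a continuous surjection from the irrationals onto $A$ by a constant on $\mathbb{Q}$). Then $g(x)=\delta_{f(x)}$ defines a measurable map $\mathbb{R}\to\mathscr PY$, and each $g(x)$ is perfect by Theorem~\ref{theorem::perfect_measures}(a); but equiperfectness tested against the inclusion $\iota\colon A\to\mathbb{R}$ would require a Borel set $E\subset\iota(A)=A$ with $f(\mathbb{R})=A\subset E$, forcing $E=A$, contradicting that $A$ is not Borel. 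So $\hom_{\M}(X,\mathscr PY)$ is strictly larger than $\hom_{\prob}(X,Y)$, your bijection cannot exist as stated, and the same family shows that the counit $\varepsilon_Y$ itself violates condition (ii), since the perfect measures on $Y$ include all Dirac measures. You have in fact located precisely the soft spot that the paper's proof-by-citation glosses over --- Giry's kernels carry no equiperfectness requirement, so for him the bijection is immediate --- but identifying the obstruction and hoping it can be crossed is not a proof, and here it can only be crossed by changing the definitions (for instance, weakening condition (ii) to pointwise perfectness of the family), not by a cleverer argument with the stated tools.
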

Thus, we can realize the Giry monad as the composition $T =  \mathscr P \,\circ\,  \delta$, and moreover, $\prob$ is equivalent to the smallest category through which $T$ factors---{\em i.e.}, it is equivalent to the Kleisli category $K(T)$ of the Giry monad. Hence every $\prob$ arrow $P\colon  X \to Y$ corresponds uniquely to a measurable arrow $X \to TY$.

 \section{Joint Distributions and Conditionals}
  Given a family of objects $\{X_i\}_{i \in I}$ we can form the cartesian product $\prod_{i \in I}X_i$ and endow this set with the product $\sigma$-algebra generated by all the projection maps $\prod_{i \in I}X_i \stackrel{\pi_j}{\longrightarrow} X_j$, one for each index $j \in I$.   It is easy to see that 
 \be
 	\left(\left(\prod_{i \in I}X_i,\bigotimes_{i\in I}\sa_{X_i}\right),\{\delta_{\pi_i}\}_{i \in I}\right)
\ee
does not give a categorical product. In fact, only weak products and equalizers exist in $\prob$, as the uniqueness condition fails for both constructions.  We use the terminology ``product space'' to denote the set product of any family $\{(X_i,\sa_{X_i})\}_{i \in I}$ of objects with the product $\sigma$-algebra  and not to imply that that object   
\[
	\left(\prod_{i \in I}X_i, \bigotimes_{i \in I} \sa_{X_i}\right)
\]
with projections satisfies any universality condition.   We will call a probability measure $J\colon  1 \to (\prod_{i \in I}X_i, \otimes_{i \in I} \sa_{X_i})$  on a product space a {\em joint distribution}. We do not mean to imply that these are distributions of a random variable, but rather indicate a measure on a product space which is not necessarily a product measure. These joint distributions are the main objects of study in Bayesian probability. 

Given any joint distribution $J\colon  1 \to \prod_{i \in I} X_{i}$, for each $j \in I$ we have the diagram
 \begin{equation}
 \label{productDiagram}
 \begin{tikzpicture}[baseline=(current bounding box.center)]
 	\node	(1)	at	(0,0)		         {$1$};
	\node	(X)	at	(-4,-3)	               {$X_j$};
	\node        (XY)  at      (0,-3)               {$\prod_{i \in I}X_i$};
	
	\draw[->,left,above] (1) to node  {$$} (X);
	\draw[->,right] (1) to node {$J$} (XY);
	\draw[->,left,auto] (XY) to node {$\delta_{\pi_{j}}$} (X);
	\draw (-2.9,-1.6) node {$\delta_{\pi_{j}}\circ J$};
 \end{tikzpicture}
 \end{equation}
where the composite $\delta_{\pi_{j}} \circ J$ is called the \emph{marginal} (distribution) \index{marginal} of $J$ on component $X_j$ and is given by $(\delta_{\pi_{j}} \circ J)(A_j) = J(\pi_{j}^{-1}A_j) $ by Lemma \ref{lem::Dirac_property}.

  Given only the probability measures on the components, $\{P_i\}_{i \in I}$,  there are many joint distributions on the product space whose marginals are the given family $\{P_i\}_{i \in I}$.    By using \emph{relationships} in the form of conditionals between the components, we bring into play additional knowledge that permits the determination of the appropriate joint distribution.  If the uncertainty of component $X_j$, as expressed by a  probability measure $P_j$ on component $X_j$, depends conditionally on a parameter which varies over component $X_i$ then we have the $\prob$ arrow $h\colon  X_i  \to X_j$.  These conditionals---which are the morphisms in $\prob$---are the key to determining a unique joint distribution.  The relationship between the components $X_i$ and $X_j$ is mediated by the conditional $h$ and expresses the relationship $P_j = h \circ P_i$.
  
 \subsection{Constructing a Joint Distribution Given Conditionals}
 \label{section::construct_joint}
 We now show how marginals and conditionals can be used to determine joint distributions in $\prob$. This development follows that of~\cite{Abramsky-Blute-Panangaden:nuclear_ideals} where we first learned of this approach (the category $\mathcal Stoch$ of stochastic kernels in that paper is nearly what we call $\prob$, although our restriction to countably generated spaces and perfect measures is replaced in the paper by restricting to Polish spaces).  Given a $\prob$-arrow $h\colon  X \to Y$ and a perfect probability measure  $P_{X}\colon  1 \to X$  on $X$, consider the diagram 
  \begin{equation}   \label{jointDefinition}
 \begin{tikzpicture}[baseline=(current bounding box.center)]
 	\node	(1)	at	(0,0)		         {$1$};
	\node	(X)	at	(-4,-3)	                  {$X$};
	\node	(Y)	at	(4,-3)               {$Y$};
	\node        (XY)  at      (0,-3)               {$X \times Y$};
	
	\draw[->, left, above] (1) to node  {$P_X$} (X);
	\draw[->,dashed,left,above] (XY) to node {$\delta_{\pi_{X}}$} (X);
	\draw[->,dashed,right,auto] (XY) to node {$\delta_{\pi_{Y}}$} (Y);
	\draw[->,dashed, below,auto] (1) to node {$J_{h}$} (XY);
	\draw[->]  (X) to [out=305,in=235,looseness=.5]  (Y) ;
	\draw (0,-3.9) node {$h$};
 \end{tikzpicture}
 \end{equation}
where $J_{h}$ is the uniquely determined joint distribution on the product space $X \times Y$ defined on the rectangles of the $\sigma$-algebra $\sa_{X} \otimes \sa_{Y}$ by
 \be \label{jointD}
 J_{h}(A \times B) = \int_{A} h_{B} \, dP_X.
 \ee
Then it follows from Theorem~\ref{theorem::perfect_measures} that $J_{h}$ is perfect.  The marginal of $J_{h}$ with respect to $Y$ then satisfies $\delta_{\pi_{Y}} \circ J_{h} = h \circ P_X$ and the marginal of $J_{h}$ with respect to $X$ is $P_X$, both of which are also perfect. By a symmetric argument, if we are given a probability measure $P_Y$  and conditional probability $k\colon  Y \to X$
 then we obtain a unique perfect joint distribution $J_{k}$ on the product space $X \times Y$ given on the rectangles by
 \be
 J_{k}(A \times B) = \int_{B} k_{A} \, dQ.
 \ee
However, if we are given $P_{X}, P_{Y}, h, k$ as indicated in the diagram
\be
 \begin{tikzpicture}[baseline=(current bounding box.center)]
 	\node	(1)	at	(0,0)		         {$1$};
	\node	(X)	at	(-3,-4)	                 {$X$};
	\node	(Y)	at	(3,-4)               {$Y$};
	\node        (XY)  at      (0,-2)               {$X \times Y$};
	
	\draw[->, above right] (1) to node  {$P_Y$} (Y);
	\draw[->, above left] (1) to node {$P_{X}$} (X);
	\draw[->,dashed, right] (XY) to node[xshift=8pt] {$\delta_{\pi_{X}}$} (X);
	\draw[->,dashed, left] (XY) to node {$\delta_{\pi_{Y}}$} (Y);
	\draw[->,dashed, right] ([xshift=2pt] 1.south) to node {$J_{k}$} ([xshift=2pt] XY.north);
	\draw[->,dashed, left] ([xshift=-2pt] 1.south) to node {$J_{h}$} ([xshift=-2pt] XY.north);
	\draw[->, above] ([yshift=2pt] X.east) to node {$h$} ([yshift=2pt] Y.west);
	\draw[->, below] ([yshift=-2pt] Y.west) to node {$k$} ([yshift=-2pt] X.east);

 \end{tikzpicture}
 \ee
then we have that $J_{h}=J_{k}$ if and only if the compatibility conditions \index{compatibility conditions}
 \be
 \begin{array}{lcl}
 P_X &=& k \circ P_Y \\
 P_Y &=& h \circ P_X
 \end{array}
 \ee
are satisfied.  Thus if the compatibility conditions are satisfied, then we can realize the product rule of probability in $\prob$ as
\be
\label{eqn::product_rule}
 \int_{A} h_{B} \, dP_X = J( A \times B ) = \int_{B} k_{A} \, dP_Y.
\ee
 In the extreme case, suppose we have a $\prob$-arrow $h\colon  X \to Y$ which factors through the terminal object $1$ as 
 \begin{equation}
 \begin{tikzpicture}[baseline=(current bounding box.center)]
 
 	\node	(X)	at	(0,0)		         {$X$};
	\node	(Y)	at	(3,0)	         {$Y$};
	\node	(1)	at	(1.5,-1)          {$1$};

	\draw[->, above] (X) to node  {$h$} (Y);
	\draw[->, below left] (X) to node {$!$} (1);
	\draw[->,below right]  (1)  to node {$Q$}  (Y);

 \end{tikzpicture}
 \end{equation}
 where $!$ represents the unique arrow from $X \to 1$. If we are also given a perfect probability measure $P\colon  1 \to X$, then we can calculate the joint distribution determined by $P$ and $h=Q \circ !$ as
\be
\begin{array}{lcl}
J(A \times B) &=& \int_{A} (Q \circ !)_B \, dP \\
&=& P(A) \cdot Q(B) 
\end{array}
\ee
so that  $J=P \otimes Q$.  This is precisely the situation where we say that the marginals $P$ and $Q$ are \emph{independent}.   Thus in $\prob$ independence corresponds to  a special instance of a conditional---one that factors through the terminal object.

\subsection{Constructing Regular Conditionals given a Joint Distribution} 
\label{section::constructing_regular_conditionals}
The following result is the basis from which the inference maps in Bayesian probability theory are subsequently constructed. The proof of the following theorem can be found in \cite{Faden:regular_conditional_probabilities}, where several equivalent conditions are identified. 

\begin{theorem}  \label{theorem::regular_conditionals} If $J: 1\to X \times Y$ is a $\prob$-morphism with marginals $P_{X}$ on $X$ and $P_{Y}$ on $Y$, then there exists a $\prob$-arrow $f$ that makes the diagram
  \begin{equation}
 \begin{tikzpicture}[baseline=(current bounding box.center)]
 	\node	(1)	at	(1.5,0)		         {$1$};
	\node	(Y)	at	(3,-2)	               {$Y$};
	\node        (X)  at      (0,-2)               {$X$};
	
	\draw[->,right] (1) to node  {$P_Y$} (Y);
	\draw[->,left] (1) to node {$P_{X}$} (X);
	\draw[->,dashed, below ] (Y) to node {$f$} (X); 
 \end{tikzpicture}
 \end{equation}
commute and satisfies
\be
	J(A \times B) = \int_{B} f_{A}\,dP_{Y}. 
\ee
Moreover, the morphism $f$ is the unique $\prob$-morphism with these properties,  up to a set of $P_Y$-measure zero.  
\end{theorem}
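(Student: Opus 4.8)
The plan is to construct $f$ as a regular conditional probability by applying the Radon--Nikodym theorem on a countable generating algebra and then using perfectness to assemble the resulting densities into a genuine Markov kernel. First observe that the commutativity constraint $f \circ P_Y = P_X$ is a free consequence of the disintegration identity: setting $B = Y$ gives $(f \circ P_Y)(A) = \int_Y f_A\, dP_Y = J(A \times Y) = P_X(A)$, since $P_X$ is the $X$-marginal of $J$. Hence it suffices to produce a $\prob$-morphism $f\colon Y \to X$ satisfying $J(A \times B) = \int_B f_A\, dP_Y$ for all rectangles.

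Fix a countable algebra $\mathcal{A}$ generating $\sa_X$, which exists because $X$ is countably generated. For each $A \in \mathcal{A}$ define the sub-probability measure $\nu_A$ on $Y$ by $\nu_A(B) = J(A \times B)$; since $\nu_A(B) \le J(X \times B) = P_Y(B)$ we have $\nu_A \ll P_Y$, so Radon--Nikodym supplies a measurable density $f_A = d\nu_A/dP_Y\colon Y \to [0,1]$. As $\mathcal{A}$ is countable, the finite additivity relations $f_{A_1 \cup A_2} = f_{A_1} + f_{A_2}$ for disjoint $A_1, A_2 \in \mathcal{A}$, together with the normalizations $f_X = 1$ and $f_\emptyset = 0$, each hold $P_Y$-almost everywhere and can therefore be enforced simultaneously off a single $P_Y$-null set $N$. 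Thus for $y \notin N$ the assignment $A \mapsto f_A(y)$ is a finitely additive $[0,1]$-valued set function on $\mathcal{A}$ of total mass $1$.

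The hard part is upgrading this finitely additive set function to a countably additive probability measure on $\sa_X$ for $P_Y$-almost every $y$. A finitely additive function on $\mathcal{A}$ extends to $\sigma(\mathcal{A}) = \sa_X$ precisely when it is continuous at $\emptyset$, and this can fail for arbitrary measurable spaces; this is exactly where the argument departs from the Polish-space disintegration, which instead invokes inner regularity by compact sets. Here perfectness does the work: by Theorem~\ref{theorem::perfect_measures}(e) the hypothesis that $J$ is perfect makes both marginals, and hence the fiberwise structure, perfect, and following Faden~\cite{Faden:regular_conditional_probabilities} perfectness furnishes $P_Y$-a.e.\ the continuity at $\emptyset$ required for the Carath\'eodory extension. (Equivalently, one may embed $X$ into $2^{\mathbb{N}}$ via the generators of $\mathcal{A}$ and use perfectness to reduce to the standard-Borel case.) Enlarging $N$ if necessary, we obtain for every $y$ outside a null set a countably additive probability measure $f(y,\cdot)$ on $\sa_X$ extending $A \mapsto f_A(y)$; on the null set we set $f(y,\cdot)$ equal to any fixed perfect probability measure so that $f$ is defined everywhere.

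It remains to verify measurability, the full rectangle identity, equiperfectness, and uniqueness. Measurability of $y \mapsto f(y,A)$ for every $A \in \sa_X$ and the identity $J(A \times B) = \int_B f_A\, dP_Y$ both hold on the algebra $\mathcal{A}$ and extend to all of $\sa_X$ by a monotone class argument, using the monotone convergence theorem to pass limits through the integral. Equiperfectness of the family $\{f(y,\cdot)\}_{y \in Y}$, which is condition (ii) in the definition of a $\prob$-morphism, is then exactly the content of Theorem~\ref{theorem::perfect_measures}(f) applied to the perfect measure $J$ reconstructed from $P_Y$ and $f$. Finally, if $f$ and $g$ both satisfy the rectangle identity then $\int_B f_A\, dP_Y = \int_B g_A\, dP_Y$ for all $B$, so $f_A = g_A$ holds $P_Y$-a.e.\ for each $A$; taking the common null set over the countable algebra $\mathcal{A}$ and invoking uniqueness of the countably additive extension gives $f(y,\cdot) = g(y,\cdot)$ for $P_Y$-almost every $y$, which is the asserted uniqueness up to a set of $P_Y$-measure zero.
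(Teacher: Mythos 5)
You should first know that the paper does not prove this theorem at all: the sentence preceding it explicitly defers the proof to Faden \cite{Faden:regular_conditional_probabilities}. So your proposal is not competing with an in-paper argument but with the classical construction itself, and judged on that basis the surrounding structure you build is correct and standard: reducing commutativity to the case $B = Y$; taking Radon--Nikodym densities $f_A$ for $A$ in a countable generating algebra $\mathcal{A}$; enforcing finite additivity and normalization off a single $P_Y$-null set; extending measurability and the rectangle identity from $\mathcal{A}$ to $\sa_X$ by a monotone class argument; obtaining equiperfectness from Theorem~\ref{theorem::perfect_measures}(e) and (f) (with the fixed perfect measure, say $P_X$, patched in on the bad null set); and proving uniqueness by agreement on $\mathcal{A}$ plus uniqueness of the extension. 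These are exactly the pieces that surround the hard core of the theorem.

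The problem is the hard core itself, which you assert rather than prove: ``perfectness furnishes $P_Y$-a.e.\ the continuity at $\emptyset$ required for the Carath\'eodory extension, following Faden.'' As stated this also misidentifies the mechanism, since perfectness does not act on the finitely additive functions $A \mapsto f_A(y)$ directly. The standard argument is the one in your parenthetical, and it is worth seeing why each ingredient is needed. Take the Marczewski function $h\colon X \to 2^{\mathbb{N}}$ built from the generators, so $\sa_X = h^{-1}\bigl(\mathcal{B}(2^{\mathbb{N}})\bigr)$; push $J$ forward to a joint distribution on $2^{\mathbb{N}} \times Y$; there a regular conditional probability $k(y,\cdot)$ exists because the Cantor set is standard Borel --- it is this compactness/regularity, not perfectness, that yields countable additivity. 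Perfectness of $P_X$ enters only afterwards: it produces a Borel set $E \subset h(X)$ with $P_X(h^{-1}E) = 1$, hence $k(y,E) = 1$ for $P_Y$-a.e.\ $y$, and only this concentration makes the pull-back $f(y, h^{-1}F) := k(y, F \cap E)$ well defined, since $h(X)$ itself need not be Borel. Alternatively, your ``continuity at $\emptyset$'' framing could be made rigorous through the perfect-implies-compact route (Ryll-Nardzewski \cite{Ryll-Nardzewski:quasi-compact_measures}, Pachl \cite{Pachl:disintegrations}), but then the nontrivial step is showing the conditional set functions inherit a compact approximating class almost everywhere, which you do not address. So either carry out the embedding argument --- it is only a few lines beyond what you wrote --- or cite Faden cleanly for the existence step, which puts your proof on exactly the same footing as the paper.
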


Interestingly, we can use Theorem~\ref{theorem::regular_conditionals} to obtain a seemingly stronger statement, {\em i.e.}, that the regular conditional probability factors through the product. Though this is not difficult to prove, we will prefer this stronger statement in the sequel.

\begin{theorem} \label{theorem::strong_regular_conditionals}
If $J: 1 \to X \times Y$ is a $\prob$-morphism with marginal distributions $P_X$ and $P_Y$ on $X$ and $Y$, then there exist  $\prob$ arrows $f$ and $g$
such that the diagram
  \begin{equation} \label{dgm::product_regular_conditionals}
 \begin{tikzpicture}[baseline=(current bounding box.center)]
 	\node	(1)	at	(0,0)		         {$1$};
	\node	(Y)	at	(3,-4)	               {$Y$};
	\node	(X)	at	(-3,-4)	               {$X$};
	\node        (XY)  at      (0,-2)               {$X \times Y$};
	
	\draw[->,right] (1) to node  {$P_Y$} (Y);
	\draw[->,left] (1) to node  {$P_X$} (X);
	\draw[->,left] (1) to node {$J$} (XY);
	\draw[->,above] (XY) to node {$\delta_{\pi_{Y}}$} (Y);
	\draw[->,above] (XY) to node {$\delta_{\pi_{X}}$} (X);
	\draw[->] (X) to [out=0,in=180,looseness=.5] (Y);
	\draw[->, below,out=210,in=-30,looseness=.5] (Y) to node {$\delta_{\pi_X} \circ f$} (X);
	\draw[->,dashed] (X) to [out=20,in=250,looseness=.5] (XY);
	\draw[->,dashed] (Y) to [out=160,in=290,looseness=.5] (XY);
         \draw (0.7, -3.2) node {$f$};
          \draw (-.7, -3.2) node {$g$};

         \draw (0, -3.7) node {$\delta_{\pi_Y} \circ g$};
 \end{tikzpicture}
 \end{equation}
commutes and 
\be
\int_A (\delta_{\pi_Y} \circ g)_B \, dP_X = J(A \times B) = \int_B (\delta_{\pi_X} \circ f)_A \,dP_Y.
\ee

\end{theorem}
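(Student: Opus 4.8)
The plan is to lift the regular conditionals supplied by Theorem~\ref{theorem::regular_conditionals} from the two factors up to the product space. Applying that theorem and its symmetric counterpart, I first obtain $\prob$-morphisms $\phi\colon Y \to X$ and $\psi\colon X \to Y$ with $J(A\times B) = \int_B \phi_A\,dP_Y$ and $J(A\times B)=\int_A \psi_B\,dP_X$ for all $A \in \sa_X$ and $B \in \sa_Y$. I then define the candidate lift $f\colon Y \to X\times Y$ on rectangles by $f(y, A\times B) = \phi_y(A)\cdot \delta_{Id_Y}(y,B)$, so that $f_y = \phi_y \otimes \delta_y$ is the product of the conditional measure $\phi_y$ with the Dirac measure concentrated at $y$; symmetrically, I set $g_x = \delta_x \otimes \psi_x$. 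Intuitively, $f_y$ is the conditional law of the pair on $X\times Y$ given $Y=y$: the second coordinate is pinned at $y$ while the first is distributed according to $\phi_y$.

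The bulk of the work is to check that $f$ (and by symmetry $g$) is a genuine $\prob$-morphism. Measurability of $y \mapsto f_y(E)$ holds on rectangles, since $f(y, A\times B) = \phi_A(y)\,\mathbf{1}_B(y)$ is a product of measurable functions, and it extends to all of $\sa_X \otimes \sa_Y$ by a routine Dynkin-system argument on the $\pi$-system of rectangles. That each $f_y$ is a perfect probability measure follows from Theorem~\ref{theorem::perfect_measures}: the Dirac measure $\delta_y$ is $\{0,1\}$-valued, hence perfect by part (a), and the product $\phi_y \otimes \delta_y$ has $\phi_y$ and $\delta_y$ as its marginals, so it is perfect by part (e).

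The one delicate point---and the step I expect to be the main obstacle---is equiperfectness of the family $\{f_y\}_{y\in Y}$. A direct attack fails: for a measurable $h\colon X\times Y \to \mathbb R$ one computes $f_y(h^{-1}E) = \phi_y(h_y^{-1}E)$ with $h_y(x) := h(x,y)$, and because the slice function $h_y$ varies with $y$, the equiperfectness of $\{\phi_y\}$ does not directly supply a single Borel set working uniformly in $y$. I would therefore argue indirectly. Computing on rectangles gives $(f\circ P_Y)(A\times B) = \int_Y \phi_A(y)\mathbf 1_B(y)\,dP_Y = \int_B \phi_A\,dP_Y = J(A\times B)$, and since the rectangles generate $\sa_X\otimes\sa_Y$, the probability measures $f\circ P_Y$ and $J$ coincide. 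As $J$ is perfect by hypothesis, the ``only if'' direction of Theorem~\ref{theorem::perfect_measures}(f) forces $\{f_y\}$ to be equiperfect off a measurable $P_Y$-null set $N$. Redefining $f_y$ to equal a fixed perfect point mass for $y\in N$ preserves measurability and changes no integral against $P_Y$, and the enlarged family is equiperfect everywhere: given $h$, if $E_1\subseteq h(X\times Y)$ witnesses equiperfectness off $N$ and $E_0$ witnesses perfectness of the point mass, then $E_0\cup E_1$ works for all $y$. (This costs nothing, since Theorem~\ref{theorem::regular_conditionals} determines these conditionals only up to $P_Y$-measure zero in any case.)

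With $f$ and $g$ established as $\prob$-morphisms, the remaining identities are immediate. By the second formula of Lemma~\ref{lem::Dirac_property}, $(\delta_{\pi_X}\circ f)(y,A) = f_y(\pi_X^{-1}A) = f_y(A\times Y) = \phi_y(A)$, so $\delta_{\pi_X}\circ f = \phi$, and symmetrically $\delta_{\pi_Y}\circ g = \psi$. Substituting these into the two integral formulas above yields exactly the displayed double equation. Finally, commutativity of~(\ref{dgm::product_regular_conditionals}) reduces to $f\circ P_Y = J$ and $g\circ P_X = J$, already verified on the generating rectangles, together with the marginal relations $\delta_{\pi_X}\circ J = P_X$ and $\delta_{\pi_Y}\circ J = P_Y$ coming from the setup; composing the latter with the former recovers $\delta_{\pi_X}\circ f\circ P_Y = P_X$ and $\delta_{\pi_Y}\circ g\circ P_X = P_Y$.
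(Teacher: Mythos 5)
Your proof is correct, but it takes a genuinely different route from the paper's. The paper never builds the lifted conditional by hand: it invokes Theorem~\ref{theorem::regular_conditionals} so as to directly produce a $\prob$-arrow $f\colon Y \to X\times Y$ satisfying $\int_C f_{A\times B}\,dP_Y = J\left(A\times (B\cap C)\right)$ --- in effect, Faden's theorem is applied to the ``graph'' joint distribution on $(X\times Y)\times Y$ obtained by pushing $J$ forward along $(x,y)\mapsto ((x,y),y)$, whose marginals are $J$ and $P_Y$ --- so $f$ arrives already certified as a $\prob$-morphism, and the displayed integral identity then falls out of Lemma~\ref{lem::Dirac_property} in three lines. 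You instead take the factor-level conditional $\phi\colon Y\to X$ from Theorem~\ref{theorem::regular_conditionals} and construct the lift explicitly as $f_y = \phi_y\otimes\delta_y$, which obliges you to verify the kernel axioms yourself: measurability by a $\pi$--$\lambda$ argument, perfectness of each $f_y$ from Theorem~\ref{theorem::perfect_measures}(a) and (e), and --- the genuinely delicate step, which you correctly isolate --- equiperfectness, obtained indirectly from the ``only if'' direction of Theorem~\ref{theorem::perfect_measures}(f) applied to $f\circ P_Y = J$, followed by a harmless redefinition on a $P_Y$-null set. The two constructions yield the same arrow up to a set of $P_Y$-measure zero. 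What the paper's route buys is brevity: all measure-theoretic verifications are inherited from Faden's theorem, at the price of leaving the auxiliary graph distribution implicit. What your route buys is explicitness: it exhibits the conditional of the pair given $Y=y$ concretely as $\phi_y\otimes\delta_y$ (the second coordinate pinned at $y$), which, for instance, makes the remark following the theorem (that $h = \delta_{\pi_Y}\circ g$ holds $P_X$-a.s.) transparent. One cosmetic caveat: after your null-set modification the identity $\delta_{\pi_X}\circ f = \phi$ holds only $P_Y$-almost everywhere rather than pointwise, but since every conclusion of the theorem is an integral identity against $P_Y$ or $P_X$ (including commutativity of the diagram, which only involves compositions with these measures), this costs nothing.
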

\begin{proof}  We can apply Theorem~\ref{theorem::regular_conditionals} to see that there is a $\prob$-arrow $f \colon  Y \to X \times Y$ satisfying $J =f \circ P_Y$ such that
\be
	\int_{C}f_{A \times B} \,dP_{Y} = J\left(A \times (B \cap C)\right).
\ee
Then from Lemma~\ref{lem::Dirac_property}, we know that $(\delta_{\pi_X} \circ f)(y, A) = f(y, A \times Y)$ and so 
\begin{align}
\int_B (\delta_{\pi_X} \circ f)_A \,dP_Y &= \int_B   f_{A \times Y}  \,dP_Y \\
&= J\left(A \times (Y \cap B)\right) \\
&= J(A \times B)
\end{align}
Similarly we obtain a $\prob$-arrow $g\colon  X \to X \times Y$ satisfying $J =g\circ P_X$ and
\be
\int_A (\delta_{\pi_Y} \circ g)_B \,dP_X = J(A \times B). 
\ee
With these facts, it is a simple exercise to check that the diagram commutes.
\end{proof}

Note that if the joint distribution $J$ is obtained by a probability measure $P_X$ and a conditional $h\colon X \rightarrow Y$ using the method described by Diagram~\ref{jointDefinition}, then using the above result and notation it follows  $P_{X}$-a.s. that $h= \delta_{\pi_Y} \circ g$.

\begin{remark}  (Tonneli's Theorem) Given a $\prob$-morphism $J:1 \to X \times Y$, with marginals $P_X$ and $P_Y$ let $\gamma\colon  X \rightarrow X \times Y$ and $\varphi\colon  Y \rightarrow X \times Y$ be the $\prob$ arrows satisfying $\gamma \circ P_X = J$  and  $ \varphi \circ P_Y= J$ whose existence is guaranteed by Theorem~\ref{theorem::strong_regular_conditionals}.  Given any measurable function $F\colon  X \times Y \rightarrow [0,1]$ we have the  diagram
  \begin{equation}
 \begin{tikzpicture}[baseline=(current bounding box.center)]
 	\node	(1)	at	(0,-.8)		         {$1$};
	\node	(Y)	at	(3,-4)	               {$Y$};
	\node	(X)	at	(-3,-4)	               {$X$};
	\node        (XY)  at      (0,-4)               {$X \times Y$};
	\node        (2)    at      (0, -6)                {$2$};
	
	\draw[->,right] (1) to node  {$P_Y$} (Y);
	\draw[->,left] (1) to node  {$P_X$} (X);
	\draw[->,auto] (1) to node {$$} (XY);
	\draw[->,below] (XY) to node {$\delta_{\pi_{Y}}$} (Y);
	\draw[->,below] (XY) to node {$\delta_{\pi_{X}}$} (X);
	\draw[->] (X) to [out=30,in=150,looseness=.5] (XY);
	\draw[->] (Y) to [out=150,in=30,looseness=.5] (XY);
         \draw (1.3, -3.3) node {$\phi$};
          \draw (-1.3, -3.3) node {$\gamma$};
          \draw(.2,-2.75) node {$J$};
          \draw[->,auto] (XY) to node {$\overline{F}$} (2);
          \draw[->,dashed,left] (X) to node {$\overline{f} = \overline{F} \circ \gamma$} (2);
          \draw[->,dashed,right] (Y) to node[xshift=4pt] {$\overline{g} = \overline{F} \circ \varphi$} (2);

 \end{tikzpicture}
 \end{equation}
where the top two triangles commute. Thus we can define $\overline{f} = \overline{F} \circ \gamma$ and $\overline{g} = \overline{F} \circ \varphi$ so that the entire diagram commutes. From this, it follows that 
\be
\int_X f \, dP_X = \int_{X \times Y} F \, dJ = \int_Y g \, dP_Y
\ee
and we can realize Tonneli's Theorem as the special case with $J=P_X \otimes P_Y$.  
\end{remark}

This formulation provides the context for the following optimal transportation problem: given  marginals $P_X$ and $P_Y$ that model the supply and demand constraints, and a cost function $F$ (defined up to a scalar constant) representing the unit cost to transport a product from $x \in X$ to $y \in Y$, what joint distribution $J$ on $X \times Y$ with marginals $P_X$ and $P_Y$ minimizes the objective function $\int_{X \times Y}F \, dJ$?   The optimal assignment is then the conditional probability $X \rightarrow Y$ determined by the optimal joint distribution $J$. For example, this problem is investigated and a unique solution is given in certain cases in~\cite{Korman-McCann:optimal_transportation}.

\section{Bayesian probability in $\prob$}   

If we replace  $X$ and $Y$  in Diagram~\ref{dgm::product_regular_conditionals} by $D$(ata) and $H$(ypotheses), and the composites $\delta_{\pi_Y} \circ g$ by $\mathcal{S}$(ampling distribution) and  $\delta_{\pi_X} \circ f$ by $\mathcal{I}$(nference), then we can define $P_D = \mcS \circ P_H$ to obtain
 \begin{equation}
 \begin{tikzpicture}[baseline=(current bounding box.center)]
         \node         (1)    at      (0,2)         {$1$};
	\node	(H)	at	(-2,0)	      {$H$};
	\node	(D)	at	(2,0)               {$D$};	
	\draw[->,above left] (1) to node {$P_H$} (H);
	\draw[->,above right] (1) to node {$P_D$} (D);
	\draw[->, above] (H) to node {$\mathcal{S}$} (D);
         \draw[->, below,out=240, in = 300,looseness=.2] (D) to  node {$\mathcal I$} (H);
 \end{tikzpicture}.
 \end{equation}
In the context of Bayesian probability the probability measure $P_H$ is often called a \emph{prior probability}. \index{prior probability}

In this notation, the product rule in $\prob$ given in Equation~\ref{eqn::product_rule} becomes
 \be  \label{productRule}
 \int_{\mathcal{D}} \mathcal{I}_{\mathcal{H}} \, dP_D = J( \mathcal{H} \times \mathcal{D} ) = \int_{\mathcal{H}} \mathcal{S}_{\mathcal{D}} \, dP_H
 \ee
where $\mathcal{H} \in \sa_H$ and $\mathcal{D} \in \sa_D$.  We will spend the remainder of this sections showing how this interpretation of these spaces in $\prob$ provides a categorical foundation for Bayesian probability. First, we briefly review the fundamental concepts in Bayesian probability theory which can be found in \cite{Jaynes:probability} and then proceed to show how $\prob$ is the appropriate category for this theory. Generally, a {\em Bayesian model}  is comprised of a number of items including 
\begin{enumerate}
	\item[(i)]  two measurable spaces $H$ and $D$ representing hypotheses and data, respectively,
 	\item[(ii)] a probability measure $P_H$ on the $H$ space called the prior probability,
 	\item[(iii)] a $\prob$ arrow $\mathcal{S}\colon  H \rightarrow D$ called the sampling distribution,
 	\item[(iv)] a $\prob$ arrow $\mathcal{I}\colon  D \rightarrow H$ called the inference map,
\end{enumerate}
Note that the data space $D$ can also be thought of as the event space for some experiment, and the $\sigma$-algebra on $D$ is determined by distinguishable data. The prior probability measures are updated via the inference map as one takes measurements, which correspond to probability measures $\mu$ on $D$.  These updated probability measures are then called {\em posterior probabilities} \index{posterior probability} and are given by $\widehat{P}_H =\mcI \circ \mu$. The posterior $\widehat P_{H}$ then becomes the prior probability for the next step and the process continues as more measurements are taken. At each step in a Bayesian process, the posterior probability is a representation of knowledge about the hypotheses based on all of the data that has been accumulated up to that point.

Using the sampling distribution $\mathcal S$ and the prior probability $P_H$ on $H$, we can define a joint distribution $J$ on the product space $H \times D$ as in Section~\ref{section::construct_joint}
by defining it on the rectangles as  $J(A \times B) = \int_A \mcS_{B} \, dP_H$. The $D$-marginal (prior probability on data) is then  $P_D = \mcS \circ P_H =  \delta_{\pi_D} \circ J$. Using Theorem~\ref{theorem::strong_regular_conditionals}, we have the following theorem.
\begin{theorem}
\label{theorem::inference}
Given $\prob$ arrows representing a  prior probability $P_{H}\colon 1 \to H$ and a sampling distribution $\mathcal S\colon H \to D$, the inference map $\mathcal I\colon  D \to H$ is determined uniquely up to a set of $P_{D}$-measure zero.
\end{theorem}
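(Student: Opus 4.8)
\emph{Proof proposal.} The plan is to reduce the statement to the regular-conditional results already in hand by first manufacturing, from the given data, the unique joint distribution on $H \times D$ and then reading off $\mathcal I$ as the regular conditional taken in the correct variable. Since Faden's existence theorem (Theorem~\ref{theorem::regular_conditionals}) and its strengthening (Theorem~\ref{theorem::strong_regular_conditionals}) already deliver both existence and almost-everywhere uniqueness of conditionals, the bulk of the work is setting up the correspondence and transporting the uniqueness through the construction.

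First I would invoke the construction of Section~\ref{section::construct_joint}: from the prior $P_H$ and the sampling distribution $\mathcal S\colon H \to D$, define the joint distribution $J\colon 1 \to H \times D$ on rectangles by $J(\mathcal H \times \mathcal D) = \int_{\mathcal H} \mathcal S_{\mathcal D}\,dP_H$, exactly as in Diagram~\ref{jointDefinition}. By the discussion following that diagram together with Theorem~\ref{theorem::perfect_measures}, this $J$ is a genuine perfect $\prob$-morphism, its $H$-marginal is $P_H$, and its $D$-marginal is $P_D = \mathcal S \circ P_H$. Next I would apply Theorem~\ref{theorem::regular_conditionals} to $J$ with the factors oriented so that the conditional points from $D$ to $H$ (that is, take the regular conditional with respect to the data marginal $P_D$); this produces a $\prob$-arrow $\mathcal I\colon D \to H$ satisfying $J(\mathcal H \times \mathcal D) = \int_{\mathcal D} \mathcal I_{\mathcal H}\,dP_D$. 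Matching this identity against the rectangle formula for $J$ is precisely the product rule of Equation~\ref{productRule}, which confirms that $\mathcal I$ is an inference map for the model. (The strengthened form, Theorem~\ref{theorem::strong_regular_conditionals}, gives the factored arrow $f\colon D \to H \times D$ with $\mathcal I = \delta_{\pi_H} \circ f$, aligning the construction with Diagram~\ref{dgm::product_regular_conditionals}.)

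For uniqueness I would argue in two stages. The joint $J$ is itself uniquely determined by $P_H$ and $\mathcal S$, since the rectangles form a $\pi$-system generating $\sa_H \otimes \sa_D$ and the formula above pins down $J$ on all of the product $\sigma$-algebra; then Theorem~\ref{theorem::regular_conditionals} guarantees that the regular conditional of this fixed $J$ with respect to $P_D$, and hence $\mathcal I$, is unique up to a set of $P_D$-measure zero. Composing these two statements yields the claimed uniqueness.

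The only genuine content sits in the regular-conditional step, which is exactly where the perfectness hypotheses and Faden's theorem do the heavy lifting; everything else is bookkeeping (computing the marginals of $J$ and identifying the integral with Equation~\ref{productRule}). The point that most warrants care is the orientation of the conditional: $\mathcal I$ is the conditional of $J$ given the \emph{data} marginal, so the resulting ambiguity is measured by $P_D$-null sets rather than $P_H$-null sets, which is what the statement asserts.
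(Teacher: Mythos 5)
Your proposal is correct and follows essentially the same route as the paper: construct the joint distribution $J$ on $H \times D$ from $P_H$ and $\mathcal S$ as in Section~\ref{section::construct_joint}, then obtain $\mathcal I$ as the regular conditional with respect to the data marginal $P_D$ via Theorem~\ref{theorem::strong_regular_conditionals} (equivalently Theorem~\ref{theorem::regular_conditionals}), with the $P_D$-a.e.\ uniqueness inherited from that theorem. Your explicit $\pi$-system argument for the uniqueness of $J$ and the remark about the orientation of the conditional are details the paper leaves implicit, but they do not change the structure of the argument.
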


\begin{proof}
Let $\mcI\colon  D \rightarrow H$ be the composition $\delta_{\pi_H} \circ f$, where $\delta_{\pi_{H}}$ is the projection $H \times D \to H$ and $f\colon  D \rightarrow H \times D$ is the $\prob$-arrow satisfying 
\be
J(U \times V) = \int_D f_{U \times V} \, d P_D
\ee
whose existence is given by Theorem~\ref{theorem::strong_regular_conditionals}. Thus
\be \label{uniqueI}
\int_A \mcS_B \, dP_H = J(A \times B) = \int_B \mcI_A \, dP_D
\ee
and this inference arrow $\mcI$ is unique in that if $\mcI'$ also satisfies equation~\ref{uniqueI} then the set
$\{ y \in Y \mid \mcI_y \ne \mcI'_y \}$ has $P_{D}$-measure zero.
\end{proof}

Thus the complete process works in the following way. A prior probability $P_{H}$ and sampling distribution $\mathcal S$ are specified, from which one determines the inference map $\mathcal I$. Once measurements $\mu\colon  1 \to D$ are taken, we then calculate the posterior probability by $\mathcal I \circ \mu$. This updating procedure can be characterized by the diagram
 \begin{equation} \label{BayesModel}
 \begin{tikzpicture}[baseline=(current bounding box.center)]
         \node         (1)    at       (0,2.5)              {$1$};
	\node	(H)	at	(-2.5,0)	      {$H$};
	\node	(D)	at	(2.5,0)               {$D$};
	\draw[->,above left] (1) to node {$P_H$} (H);
	\draw[->,dashed,above right] (1) to node {$\mu$} (D);
	\draw[->, above] (H) to node {$\mathcal S$} (D);
         \draw[->, densely dotted, below,out=240, in = 300,looseness=.2 ] (D) to node {$\mathcal I$}  (H);
	\draw[->,dashed,right, out=255,in=30] (1) to node[xshift=5pt] {$\mathcal I \circ \mu$} (H);
 \end{tikzpicture}
 \end{equation}
 where the solid lines indicate arrows given {\em a priori}, the dotted line indicates the arrow determined using Theorem~\ref{theorem::strong_regular_conditionals}, and the dashed lines indicate the updating after a measurement. Note that if there is no uncertainty in the measurement, then $\mu=\delta_{\{x\}}$ for some $x \in D$, but in practice there is usually some uncertainty in the measurements themselves.
 
 Following the calculation of the posterior probability, the sampling distribution is then updated, if required. The process can then repeat:  using the posterior probability and the updated sampling distribution the updated joint probability distribution on the product space is determined and the corresponding (updated) inference map determined.  We can then continue to iterate as long as new measurements are received.  For some problems (such as with the standard urn problem with replacement of balls) the sampling distribution does not change from iterate to iterate, but the inference map is updated since the posterior probability on the hypothesis space changes with each measurement. The model selection problem (either once at the beginning of this process, or iteratively throughout) can also be modeled as a meta-Bayesian process, where the hypothesis space is the space of potential models and the data constitutes some information that would inform on the suitability of a given model. 
\begin{remark}
We know from Theorem~\ref{theorem::inference} that the inference map $\mathcal I$ is uniquely determined by $P_{H}$ and $\mathcal S$ up to a set of $P_{D}$-measure zero. However, there is no reason {\em a priori} that a measurement $\mu\colon  1 \to D$ is required to be absolutely continuous with respect to $P_{D}$. In $\mu$ is not absolutely continuous with respect to $P_{D}$, then a different choice of inference map $\mathcal I'$ could yield a different posterior probability---{\em i.e.}, we could have $\mathcal I \circ \mu \neq \mathcal I' \circ \mu$. Thus we make the assumption that measurement probabilities on $D$ are absolutely continuous with respect to the prior probability $P_{D}$ on $D$. This is a reasonable assumption, however, since if a data event is impossible (has $P_{D}$-measure zero) under a certain model, then the model should not be expected to make an meaningful inference when presented with that data. On the other hand, it is easy to see that if a measurement $\mu \ll P_{D}$, then $\mathcal I \circ \mu \ll P_{H}$, as expected.
\end{remark}  

  We emphasize that this procedure can be employed for any perfect prior probability and any regular conditional probability.  For example, given a perfect prior $P\colon  1 \to \px$ and the conditional $\varepsilon_{X}\colon  \px \to X$ there corresponds a unique inference map $\mcI\colon  X \to \px$ satisfying, for all $A \in \sa_X$ and for all $\B \in \sa_{\px}$,
\be
\int_{\px}{\varepsilon_{X}}_A \, dP = \int_{X} \mcI_{\B} \, d(\varepsilon_{X} \circ P).
\ee
  
In the case where $X = 2 = \{\top, \bot\}$ (the two element set), these ``higher order distributions''  $P\colon  1 \to \mathscr P2$
can be used to explicate the concept of $A_p$ distributions as characterized by Jaynes~\cite[Chapter 18]{Jaynes:probability}. 
Using our notation, a proposition $A$, which is a morphism $A\colon  1 \to 2$ in the category $\mathcal Set$ of sets,  has an associated probability of truth, say $Pr(A)=p$. Hence $A$ determines a $\prob$-morphism $\overline{A}\colon  1 \to 2$, with $\overline{A}(\{\top\})=p$.
The information supplied by the arrow $\overline{A}$ consists only of the single value $p$ and fails to indicate how sensitive this proposition is to additional data. 
The confidence that one has in the value $p$ can be supplied by the higher order distributions which are probability measures on the space $\mathscr P2$ of probability measures on $2$.  Since $\mathscr P2$ consists precisely of the Bernoulli distributions $B_{\theta} = \theta \delta_{\top} + (1- \theta) \delta_{\bot}$, where $\theta \in [0,1]$, it follows that  $\mathscr P2 \simeq [0,1]$. Consequently, any distribution
\be
1 \xrightarrow{A_p} \mathscr P2
\ee
has an expected value which can be calculated using the composition
 \be
1 \xrightarrow{A_p} \mathscr P2 \xrightarrow{\varepsilon_{2}} 2.
\ee
Thus $E(A_{p}) = (\varepsilon_{2} \circ A_p)( \{\top\}) = p$ and any such distribution provides a more informative measure.  For example, the two distributions on $\mathscr P2 \simeq [0,1]$ specified by  $p = \delta_{\frac{1}{2}}$ and $p'$ the uniform (Lebesque) measure
both have expected value $\frac{1}{2}$. Yet clearly, the first is deterministic, expressing a (complete) confidence in the statement that the expected value of the proposition 
\begin{equation}
1 \xrightarrow{~~\overline{A}~~} 2
 \end{equation}
where $\overline{A}(\top) = \frac{1}{2}$ is $\frac{1}{2}$. On the other hand, the distribution $p'$ also determines $\overline{A}$, but instead expresses a maximal ignorance modeled by the uniform distribution.

\section{The Category of Decision Rules}

 Recall that the Giry monad $T$ factors through $\prob$ via the adjunction of Theorem~\ref{adjunction}. At the other end of the spectrum of categories through which the Giry monad factors is the Eilenberg--Moore category $\measT$, consisting of the Eilenberg--Moore algebras of the Giry monad. From the theory of monads (see \cite{Barr-Wells:TTT}, for example), we know that $\prob$ then embeds into $\measT$, which has additional structure that is useful for dealing with other aspects of probability theory and decision making which $\prob$ is not equipped for.  Let us briefly recall the definition of a $T$-algebra. 

If  $(T,\eta,\mu)$ is a monad in a category $\mathcal{C}$, a {\em $T$-algebra} $(X,\alpha)$ is a pair consisting of an object $X$ in $\mathcal{C}$ and a $\mathcal{C}$ arrow $\alpha\colon  TX \to X$ such that the diagrams
\begin{equation}   \label{associativity}
 \begin{tikzpicture}[baseline=(current bounding box.center)]
 
        \node      (TTX)   at    (-7,0)      {$T^2X$};
        \node       (T1X)  at    (-7,-2)     {$TX$};
        \node       (T2X)  at    (-5, 0)     {$TX$};
        \node       (X0)    at    (-5,-2)     {$X$};
        \draw[->,left] (TTX) to node {$T\alpha$} (T1X);
        \draw[->,above] (TTX) to node {$\mu_X$} (T2X);
        \draw[->,right](T2X) to node {$\alpha$} (X0);
        \draw[->,below](T1X) to node {$\alpha$} (X0);
        
         \node         (X1)  at      (-2,0)      {$X$};
 	\node	(TX)	at	(0,0)		         {$TX$};
	\node	(X)	at	(0,-2)	         {$X$};	
	\draw[->,right] (TX) to node {$\alpha$} (X);
	\draw[->,above] (X1) to node {$\eta_X$} (TX);
	\draw[->,below left ] (X1) to node {$Id$} (X);		
 \end{tikzpicture}
 \end{equation}
commute; the first diagram is called the associative law and the second diagram the unit law.  A morphism of $T$-algebras $(X,\alpha) \stackrel{f}{\longrightarrow} (Y,\beta)$ is an arrow $f\colon X \to Y$ of $\mathcal{C}$ such that the diagram
 \begin{equation}   \label{morphism}
 \begin{tikzpicture}[baseline=(current bounding box.center)]
 	\node	(X1)	at	(0,0)		         {$TX$};
	\node	(Y1)	at	(3,0)	                  {$TY$};
	\node	(X)	at	(0,-2.5)	         {$X$};
	\node        (Y)  at      (3,-2.5)                     {$Y$};
	
	\draw[->,above] (X1) to node  {$Tf$} (Y1);
	\draw[->,right] (Y1) to node  {$\beta$} (Y);
	\draw[->,left] (X1) to node {$\alpha$} (X);
	\draw[->,below] (X) to node {$f$} (Y);
		
 \end{tikzpicture}
 \end{equation}
commutes.

When $T$ is the Giry monad, an algebra $TX \stackrel{\alpha}{\longrightarrow} X$ consists of a measurable space $X$ with a countably generated $\sigma$-algebra, the space $TX = \px$ of probability measures on $X$, and a measurable map $\alpha$ satisfying the two defining properties of a $T$-algebra.  The measurable map $Tf$ in the definition of a morphism of $T$-algebras is the pushforward map:  given $P \in TX$ the pushforward by $f$ is $Tf(P) = f_{\ast}P  \in \mathscr PY$.
 
 The $T$-algebras $(X, \alpha)$ are often called decision rules since the measurable map $\alpha$ assigns (decides) a value in $X$ to each probability measure $P$ on $X$.  Alternatively, we can think of a decision rule as collapsing a probability distribution to a definite value, or {\em derandomizing} a probability distribution as in \cite{Doberkat:tracing}.  For this reason, we often use the descriptive characterization of \v{C}encov~\cite{Cencov:statistical_decision_rules}  and call the category $\measT$ the {\em category of decision rules}.\footnote{\v{C}encov did not work in $\measT$, but rather in $\prob$, restricting to measurable spaces $X$ such that $\px$ has a $\sigma$-algebra generated by finitely many atoms. The primary difference in the current approach and that of \v{C}encov is that we take a Bayesian viewpoint, while he is attempting to describe the standard statistical inference perspective.}

Embedding the main consequence of the existence of regular conditional probabilities for Bayesian probability into $\measT$, we have the following.
\begin{theorem}
Given a measurable function $\mcS\colon  H \rightarrow TD$, there exists a measurable function $\mcI\colon  D \to TH$ such that  $\hat{\mcI}=\mu_H \circ \mcI$ is a retraction of $\hat{\mcS} =\mu_{D} \circ T\mcS $ in $\M$. 
\end{theorem}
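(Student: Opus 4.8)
The plan is to recast the statement inside $\prob$ and then transport it to $\measT$ by functoriality of the canonical embedding. Under the equivalence of $\prob$ with the Kleisli category $K(T)$, the given measurable map $\mcS\colon H\to TD$ is exactly a $\prob$-morphism $\mcS\colon H\to D$, and $\hat{\mcS}=\mu_D\circ T\mcS$ is its image under the full and faithful embedding $K(T)\hookrightarrow\measT$ that sends $X$ to the free algebra $(TX,\mu_X)$ and a Kleisli arrow $f\colon X\to TY$ to its unique algebra extension $\mu_Y\circ Tf$. Unwinding the definitions of $T$, $\mu$, and the pushforward shows $\hat{\mcS}(Q)(B)=\int_H \mcS_B\,dQ$; that is, $\hat{\mcS}(Q)$ is precisely the $\prob$-composite $\mcS\circ Q$.

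Next I would manufacture the inference arrow. Applying Theorem~\ref{theorem::inference} (equivalently Theorem~\ref{theorem::strong_regular_conditionals}) to the prior $P_H$ and the sampling distribution $\mcS$ produces a $\prob$-arrow $\mcI\colon D\to H$, unique up to a $P_D$-null set, obeying the product rule $\int_A \mcS_B\,dP_H=\int_B \mcI_A\,dP_D$. Reading $\mcI$ as a measurable map $D\to TH$ and forming its free extension $\hat{\mcI}=\mu_H\circ T\mcI$ gives the candidate retraction, and the same unwinding yields $\hat{\mcI}(R)(A)=\int_D \mcI_A\,dR$.

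The heart of the argument is then the identity $\hat{\mcI}\circ\hat{\mcS}=\mathrm{id}_{TH}$. Since $K(T)\hookrightarrow\measT$ is a functor, it carries the Kleisli composite $\mcI\circ\mcS$ to $\hat{\mcI}\circ\hat{\mcS}$ and the Kleisli identity $\eta_H$ to $\mu_H\circ T\eta_H=\mathrm{id}_{TH}$; thus it suffices (and by faithfulness is equivalent) to show that $\mcI$ is already a retraction of $\mcS$ in $\prob$, i.e. $\mcI\circ\mcS=\mathrm{id}_H$. Expanding directly, a Tonelli-type interchange against the joint $J_Q(A\times B)=\int_A \mcS_B\,dQ$ gives $(\hat{\mcI}\circ\hat{\mcS})(Q)(A)=\int_H\big(\int_D \mcI_A\,d\mcS_h\big)\,dQ(h)$, so the retraction amounts to the pointwise statement $\int_D \mcI_A\,d\mcS_h=\mathbf 1_A(h)$ holding for almost every $h$.

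I expect this pointwise identity to be the main obstacle and the single place where the hypotheses must be spent carefully, since it asserts that the Bayesian posterior attached to $\mcS$ genuinely inverts the sampling channel. I would attack it through the uniqueness clause of Theorem~\ref{theorem::regular_conditionals}: feeding the joint $J$ of $P_H$ and $\mcS$ into that theorem identifies $\mcI$, up to a $P_D$-null set, with the regular conditional recovered from $J$ on the $H$-side, and then the product rule~(\ref{productRule}) promotes this to the claimed retraction. Two delicacies deserve attention, and a careful statement will need to absorb them: first, $\mcI$ is only pinned down modulo $P_D$-measure zero, so the retraction can be expected to hold only relative to the prior and modulo the corresponding null sets rather than as a strict identity on all of $TH$; and second, the interchange of integrals and the very existence of the regular conditional rest on the perfectness (equiperfectness) hypotheses guaranteeing, via Theorem~\ref{theorem::perfect_measures}, that $J$ is perfect.
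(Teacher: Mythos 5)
Your framework and your reduction are both correct, and they match the paper's intended route (Theorem~\ref{theorem::inference} plus the Kleisli--Eilenberg--Moore embedding; as you implicitly note, the statement's ``$\mu_H\circ\mcI$'' is a typo for $\mu_H\circ T\mcI$). The genuine gap is in the step you defer to the end: the pointwise identity $\int_D \mcI_A\,d\mcS_h=\delta_{Id_H}(h,A)$, equivalently $\mcI\circ\mcS=\delta_{Id_H}$ in $\prob$, is \emph{false} in general, and no appeal to the uniqueness clause of Theorem~\ref{theorem::regular_conditionals} can establish it. What Bayesian inversion actually yields is the product rule $\int_B\mcI_A\,dP_D=\int_A\mcS_B\,dP_H$, whose consequence at the level of composites is only $\mcI\circ P_D=P_H$, i.e.\ that the single measure $P_H$ is a fixed point of $\hat{\mcI}\circ\hat{\mcS}$; this is strictly weaker than $\hat{\mcI}\circ\hat{\mcS}=\mathrm{id}_{TH}$. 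Concretely, take $H=D=2$ with the discrete $\sigma$-algebra and let $\mcS_h$ be the uniform measure on $D$ for every $h$ (all measures here are perfect and all $\sigma$-algebras countably generated, so this lives inside the paper's setting). Then $\hat{\mcS}\colon TH\to TD$ is a constant map, so it admits no left inverse in $\M$ for \emph{any} choice of $\mcI$; the Bayesian inference map is the constant map $\mcI_d=P_H$, and $\hat{\mcI}\circ\hat{\mcS}$ is the constant map at $P_H$, not the identity. So the attack you propose cannot close the argument, and your second ``delicacy'' is not a delicacy but the fatal obstruction.

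For comparison, the paper's own proof of this theorem consists solely of the displayed square of arrows $TH\to T^2D\to TD\to T^2H\to TH$ together with the sentence that the claim ``follows immediately from Theorem~\ref{theorem::inference} and the embedding of $\prob$ into $\measT$''; the retraction identity is never verified there. Your more careful unwinding is therefore valuable: it shows exactly why ``immediately'' cannot be made rigorous, since by faithfulness of the embedding the retraction property is \emph{equivalent} to $\mcI\circ\mcS$ being the identity of $H$ in $\prob$, which Bayesian inversion does not provide (it provides inversion only relative to the prior, up to $P_D$-null sets). The statement one can honestly extract from Theorem~\ref{theorem::inference} and the embedding is the prior-relative fixed-point property $\hat{\mcI}(\hat{\mcS}(P_H))=P_H$ (and dually $\hat{\mcS}(\hat{\mcI}(P_D))=P_D$), and if you want a provable theorem, that is the one to state and prove.
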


\begin{proof}
This follows immediately from Theorem~\ref{theorem::inference} and the embedding of $\prob$ into $\measT$ and can be summarized in the diagram
  \begin{equation}   \label{associativity}
 \begin{tikzpicture}[baseline=(current bounding box.center)]
 	\node	(H)	at	(0,0)		         {$TH$};
	\node	(D2)	at	(3,1.5)	                  {$T^2D$};
	\node         (D)   at      (6,0)                   {$TD$};
	\node         (H2)  at     (3,-1.5)                  {$T^2H$};
	
	\draw[->,above left] (H) to node  {$T\mcS$} (D2);
	\draw[->,above right] (D2) to node  {$\mu_D$} (D);
	\draw[->,below right] (D) to node {$T\mcI$} (H2);
	\draw[->,below left] (H2) to node {$\mu_H$} (H);
	
	\draw[->, above] ([yshift=3pt] H.east) to node {$\hat{\mcS}$} ([yshift=3pt] D.west);
	\draw[->, below] ([yshift=-3pt] D.west) to node {$\hat{\mcI}$} ([yshift=-3pt] H.east);
 \end{tikzpicture}.
\end{equation}
\end{proof}

One of the primary advantages to using the existence of regular conditional probabilities guaranteed by Theorem~\ref{theorem::strong_regular_conditionals} is that we only require that measurable spaces have countably generated $\sigma$-algebras and that the measures are perfect. In contrast, much of the previous work involving the Giry monad and regular conditional probabilities requires resorting to topological arguments and restricting to Polish spaces. For example, in \cite{Doberkat:stochastic_relations}, Doberkat characterizes the $T$-algebras for the Giry monad under the Polish space assumption, and proves the counter-intuitive result that there are no non-trivial decision rules for finite spaces in $\measT$. In contrast, we exhibit a finite space having a $T$-algebra when one does not require topological restrictions.  

\begin{example}
An important such case is the decision rule  $d\colon T2 \rightarrow 2$ given by 
\be \label{subobject}
d(P)= 
\begin{cases}
\top & \text{if } P(\{\top\})=1 \\
\bot & \text{if } P(\{\top\}) < 1.
\end{cases}
\ee
The function $d$ is measurable since $d^{-1}(\{\top\}) = \{ \delta_{\top}\} \in \sa_{T2}$.   The associativity identity 

  \begin{equation}
 \begin{tikzpicture}[baseline=(current bounding box.center)]
 	\node	(22)	at	(0,0)		         {$T^22$};
	\node	(21)	at	(3,0)	                  {$T2$};
	\node	(12)	at	(0,-3)	         {$T2$};
	\node        (2)  at      (3,-3)                     {$2$};
	
	\draw[->,above] (22) to node  {$Td$} (21);
	\draw[->,right] (21) to node  {$d$} (2);
	\draw[->,left] (22) to node {$\mu_2$} (12);
	\draw[->,below] (12) to node {$d$} (2);
	
	\node	(Q)	at	(5,0)		         {$Q$};
	\node	(d1Q)	at	(9,0)	                  {$Qd^{-1}$};
	\node	(mu)	at	(5,-3)	         {$\mu_2(Q)$};
	\node        (tdd)  at      (9,-3)                     {$d( \mu_2(Q)) = d(Qd^{-1})$};
	
	\draw[->,above] (Q) to node  {$Td$} (d1Q);
	\draw[->,right] (d1Q) to node  {$d$} (tdd);
	\draw[->,left] (Q) to node {$\mu_2$} (mu);
	\draw[->,below] (mu) to node {$d$} (tdd);

 \end{tikzpicture}
 \end{equation}
where $\mu$ is the monad multiplication defined by 
\begin{equation}
\mu_2(Q)(A) = \int_{q \in T(2)} ev_A(q) \, dQ,
\end{equation}
is satisfied since both routes map the element $Q \ne \delta_{\delta_{\top}} \in T^2(2)  \mapsto \bot$ while $\delta_{\delta_{\top}} \mapsto \top$.  The unit law $Id_2 = d \circ \eta_2$ is trivial to verify.

The decision rule $d\colon  T2 \rightarrow 2$ partitions the space $T2$ into 
$\delta_{ \top}$ and all measures on  $2$ whose value on $\{ \top \}$ is of measure less than one.  There are many other decision rules for $2$, and any other finite or nonfinite space. Characterizing decision rules without the requirement for continuity is an open problem.  
\end{example}

\section{Acknowledgements}
The authors would like to express gratitude to F.W. Lawvere, P.F. Stiller and T. Nguyen for many fruitful conversations regarding these ideas. We also thank the reviewer for the helpful comments provided that have led to a much clearer exposition. This work was partially supported by the Air Force Office of Scientific Research, for which the authors are extremely grateful.


\begin{thebibliography}{10}
\providecommand{\url}[1]{{#1}}
\providecommand{\urlprefix}{URL }
\expandafter\ifx\csname urlstyle\endcsname\relax
  \providecommand{\doi}[1]{DOI~\discretionary{}{}{}#1}\else
  \providecommand{\doi}{DOI~\discretionary{}{}{}\begingroup
  \urlstyle{rm}\Url}\fi

\bibitem{Abramsky-Blute-Panangaden:nuclear_ideals}
Abramsky, S., Blute, R., Panangaden, P.: Nuclear and trace ideals in tensored
  $\ast$-categories.
\newblock J. Pure Appl. Algebra \textbf{143}, 3--47 (1999)

\bibitem{Barr-Wells:TTT}
Barr, M., Wells, C.: Toposes, triples and theories.
\newblock Repr. Theory Appl. Categ. (12) (2005).
\newblock Corrected reprint of the 1985 original (Springer-Verlag)

\bibitem{Berger:Statistical_decision_theory}
Berger, J.: Statistical Decision Theory and Bayesian Analysis, 2nd edn.
\newblock Springer Series in Statistics. Springer-Verlag, New York (1985)

\bibitem{vanBreugel:metric_monad}
van Breugel, F.: The metric monad for probabilistic nondeterminism (2005).
\newblock Unpublished

\bibitem{Doberkat:stochastic_relations}
Doberkat, E.E.: Characterizing the {E}ilenberg--{M}oore algebras for a monad of
  stochastic relations (2004).
\newblock Internal Memorandum No. 147

\bibitem{Doberkat:tracing}
Doberkat, E.E.: Derandomizing probabilistic semantics through
  {E}ilenberg--moore algebras for the {G}iry monad (2004).
\newblock Internal Memorandum No. 149

\bibitem{Doberkat:Kernel}
Doberkat, E.E.: Kleisli morphisms and randomized congruences for the giry
  monad.
\newblock J. Pure Appl. Algebra \textbf{211}(3), 638--664 (2007)

\bibitem{Dudley:probability}
Dudley, R.: Real Analysis and Probability.
\newblock No.~74 in Cambridge Studies in Advanced Mathematics. Cambridge
  University Press, Cambridge (2002)

\bibitem{Faden:regular_conditional_probabilities}
Faden, A.M.: The existence of regular conditional probabilities: necessary and
  sufficient conditions.
\newblock Ann. Probab. \textbf{13}(1), 288 -- 298 (1985)

\bibitem{Giry:categorical_probability}
Giry, M.: A categorical approach to probability theory.
\newblock Categorical Aspects of Topology and Analysis \textbf{915}, 68--85
  (1981)

\bibitem{Howson-Urbach:scientific_reasoning}
Howson, C., Urbach, P.: Scientific Reasoning: The Bayesian Approach, 2nd edn.
\newblock Open Court Publishing, Chicago (1993)

\bibitem{Jaynes:probability}
Jaynes, E.T.: Probability Theory: The Logic of Science.
\newblock Cambridge University Press, Cambridge (2003)

\bibitem{Kock:commutative_monads}
Kock, A.: Commutative monads as a theory of distributions.
\newblock Theory Appl. Categ. \textbf{26}, 97--131 (2012)

\bibitem{Korman-McCann:optimal_transportation}
Korman, J., Mc{C}ann, R.: Optimal transportation with capacity constraints
  (2012).
\newblock Ar{X}iv:1201.6404v2

\bibitem{Lawvere:prob_mappings}
Lawvere, W.F.: The category of probabilistic mappings (1962).
\newblock Unpublished

\bibitem{Lawvere:personal_2011}
Lawvere, W.F.: Bayesian sections (2011).
\newblock Personal communication

\bibitem{Meng:metric_spaces}
Meng, X.: Categories of convex sets and of metric spaces, with applications to
  stochastic programming and related areas.
\newblock Ph.D. thesis, State University of New York at Buffalo (1988)

\bibitem{Pachl:disintegrations}
Pachl, J.K.: Disintegration and compact measures.
\newblock Math. Scand. \textbf{43}, 157 --168 (1978)

\bibitem{Rodine:perfect_probabilities}
Rodine, R.H.: Perfect probability measures and regular conditional probabilities.
\newblock Ann. Math. Statist. \textbf{37}, 1273 -- 1278 (1966)

\bibitem{Ryll-Nardzewski:quasi-compact_measures}
Ryll-{N}ardzewski, C.: On quasi-compact measures.
\newblock Fund. Math. \textbf{40}, 125--130 (1953)

\bibitem{Cencov:statistical_decision_rules}
\v{C}encov, N.: Statistical Decision Rules and Optimal Inference,
  \emph{Translations of Mathematical Monographs}, vol.~53.
\newblock American Mathematical Society (1982)

\bibitem{Wendt:disintegrations}
Wendt, M.: The category of disintegrations.
\newblock Cahiers Topologie G\'{e}om. Diff\'{e}rentielle Cat\'{e}g.
  \textbf{35}(4), 291--308 (1994)

\end{thebibliography}

\end{document}